\theoremstyle{plain}
\newtheorem{theorem}{Theorem}[section]
\newtheorem{thm}[theorem]{Theorem}
\newtheorem{cor}[theorem]{Corollary}
\newtheorem{prop}[theorem]{Proposition}
\newtheorem{lem}[theorem]{Lemma}
\theoremstyle{definition}
\newtheorem{rem}[theorem]{Remark}
\newtheorem{defn}[theorem]{Definition}
\newcommand{\bN}{{\mathbb{N}}}
\newcommand{\bT}{{\mathbb{T}}}
\newcommand{\bZ}{{\mathbb{Z}}}
  \newcommand{\A}{{\mathcal{A}}}
  \newcommand{\B}{{\mathcal{B}}}
  \newcommand{\E}{{\mathcal{E}}}
  \newcommand{\G}{{\mathcal{G}}}
\newcommand{\M}{{\mathcal{M}}}
\newcommand{\fA}{{\mathfrak{A}}}
\newcommand{\fD}{{\mathfrak{D}}}
\newcommand{\fF}{{\mathfrak{F}}}
\newcommand{\rC}{{\mathrm{C}}}
\newcommand{\upchi}{{\raise.35ex\hbox{\ensuremath{\chi}}}}
\newcommand{\qforal}{\quad\text{for all}\quad}
\newcommand{\ran}{\operatorname{Ran}}
\newcommand{\spn}{\operatorname{span}}
\newcommand{\Per}{\operatorname{Per}}
\newcommand{\ca}{\mathrm{C}^*}
\newcommand{\ol}{\overline}
\begin{document}
\title[Cycline subalgebras]
{Cycline subalgebras of $k$-graph C*-algebras}
\author[D. Yang]{Dilian Yang}
\address{Dilian Yang,
Department of Mathematics $\&$ Statistics, University of Windsor, 
401 Sunset Avenue, Windsor, ON
N9B 3P4, CANADA}
\email{dyang@uwindsor.ca}

\begin{abstract}
In this paper, we prove that the cycline subalgbra of a $k$-graph C*-algebra is 
maximal abelian, and show when it is a Cartan subalgebra (in the sense of Renault). 
\end{abstract}

\subjclass[2010]{46L05}  
\keywords{$k$-graph algebra, cycline algebra, Cartan algebra, MASA, conditional expectation} 
\thanks{The author partially supported by an NSERC grant.}

\date{}
\maketitle

\section{Introduction}
\label{S:Intr}

Higher rank graph algebras (or $k$-graph algebras) have been attracting a lot of attention recently.
See, for example, \cite{Rae05} and the references therein. 
They were first introduced by Kumjian-Pask in 2000 \cite{KumPask} in order to generalize directed graph algebras
and higher rank Cuntz-Krieger algebras studied by Robertson and Steger \cite{RS99}. 
For a given $k$-graph $\Lambda$, its graph C*-algebra $\ca(\Lambda)$ is the universal C*-algebra
among Cuntz-Krieger $\Lambda$-families. 

One of the most important and central topics on $k$-graph algebras is to determine when a given representation $\pi$ from $\ca(\Lambda)$ to a C*-algebra $\A$ is injective. 
This is closely related to so called ``uniqueness theorems" in the literature. There are two such 
theorems: the \textit{gauge invariant uniqueness theorem} (GIUT) and the \textit{Cuntz-Krieger uniqueness theorem} (CKUT), 
which have been known for some time.
Both GIUT and CKUT conclude that $\pi$ is injective if and only if its restriction 
$\pi|_{\fD_\Lambda}$ of $\pi$ onto the diagonal algebra $\fD_\Lambda$ of $\ca(\Lambda)$ is injective,
under the following conditions: the GIUT requires the existence of an action 
$\theta$ of $\bT^k$ on $\A$ such that $\pi$ is equivariant between $\theta$ and the canonical gauge action $\gamma$ of $\bT^k$ on $\ca(\Lambda)$,
while the CKUT requires that  $\Lambda$ is aperiodic.

It is well-known that 
 the aperiodicity is a very stringent condition, 
and that it is very hard to check (even in single-vertex $2$-graphs \cite{DY09a}). Thus, a very important and necessary task is to find a more general version of 
the CKUT. This has been successfully achieved by Brown-Nagy-Reznikoff recently in \cite{BNR14}. The most natural candidate 
of $\fD_\Lambda$ in the general case is the so called \textit{cycline subalgebra} $\M_\Lambda$ (whose definition will be precisely given later). Brown-Nagy-Reznikoff 
proved the following \textit{generalized CKUT}: For \textit{any} row-finite source-free $k$-graph $\Lambda$, 
a representation $\pi$ of $\ca(\Lambda)$ is injective if and only if the restriction $\pi|_{\M_\Lambda}$ is injective.

Returning to an aperiodic $k$-graph $\Lambda$, it is known that $\fD_\Lambda$ is a MASA (maximal abelian subalgebra) in $\ca(\Lambda)$,
and that  there is a faithful conditional expectation from $\ca(\Lambda)$ onto $\fD_\Lambda$. 
Besides, in general, for a given abelian C*-subalgebra $\B$ of a C*-algebra $\A$, it is always nice and interesting to know if $\B$ is a MASA in $\A$ and 
if there is a faithful conditional expectation from $\A$ onto $\B$. 
So Brown-Nagy-Reznikoff asked the following two natural questions on $\M_\Lambda$ (cf. P.~2591 and P.~2601 in \cite{BNR14}):
\smallskip
\begin{center}
\begin{itemize}
\item[Q1.] \textit{Is $\M_\Lambda$ a MASA in $\ca(\Lambda)$?}
\item[Q2.] \textit{Is there a faithful conditional expectation from $\ca(\Lambda)$ onto $\M_\Lambda$?}
\end{itemize}
\end{center}
Our goal in this note is the following: (a) we answer Q1 affirmatively, 
(b) we study when Q2 has a positive answer, and so provide a condition which guarantees 
$\M_\Lambda$ is a Cartan subalgebra in $\ca(\Lambda)$ (in the sense of Renault \cite{Ren08}).

The remaining of this paper is organized as follows. In the next section, some necessary background is given.  
Q1 above is completely answered in Section \ref{S:Lemmas}.  In Section \ref{S:Cartan} we study when $\M_\Lambda$
is Cartan, and so in particular answer Q2.

\section{Preliminaries}
\label{S:Pre}

In this section, we give some necessary background which will be used later.  At the same time, we fix our notation. See \cite{BNR14, CKSS14, KumPask, Rae05, Yan14} for more information.

\subsection{$k$-graphs}

Let $k\ge 1$ be a natural number. Regard $\bN^k$ (containing $0$) as a small category with one object, and denote its standard generators 
as $e_1,\ldots, e_k$. 
A \textit{$k$-graph} (also known as \textit{rank $k$ graph}, or \textit{higher rank graph}) is a countable small category $\Lambda$ with a functor $d:\Lambda\to \bN^k$ satisfying the following 
factorization property:
Whenever $\xi\in\Lambda$ satisfies $d(\xi)=m+n$, there are unique elements $\eta, \zeta\in\Lambda$ such that 
$d(\eta)=m$, $d(\zeta)=n$ and $\xi=\eta\zeta$. 
For $n\in \bN^k$, let $\Lambda^n=d^{-1}(n)$, and so $\Lambda^0$ is the vertex set of $\Lambda$. 
There are source and range maps $s, r: \Lambda\to \Lambda^0$ such that $r(\xi)\xi s(\xi)=\xi$ for all $\xi\in\Lambda$.
For $v\in\Lambda^0$,
$v\Lambda=\{\xi\in\Lambda: r(\xi)=v\}$. 
We say that a $k$-graph $\Lambda$ is \textit{row-finite and source-free} if 
$0<|v\Lambda^n|<\infty$ for all $v\in\Lambda^0$ and $n\in \bN^k$. 

Let
\[
\Omega_k=\big\{(m,n)\in \bN^k\times \bN^k\mid m\le n\big\}. 
\]
Define $d,s,r:\Omega_k\to \bN^k$ by $d(m,n)=n-m$, $s(m,n)=n$, and $r(m,n)=m$.
One can check that $\Omega_k$ is a row-finite and source-free $k$-graph.

Let $\Lambda$ and $\Gamma$ be two $k$-graphs. A \textit{$k$-graph morphism} between $\Lambda$ and $\Gamma$ is a 
functor $x:\Lambda\to \Gamma$ such that $d_\Gamma(x(\lambda))=d_\Lambda(\lambda)$ for all $\lambda\in\Lambda$.  
The \textit{infinite path space of $\Lambda$} is defined as
\[
\Lambda^\infty=\big\{x:  \Omega_k\to \Lambda\mid x \text{ is a } k\text{-graph morphism}\big\}.
\]
If $\Lambda$ is row-finite and source-free, it is often useful to think of every element of $\Lambda^\infty$ as 
an infinite path, which contains infinitely many edges of degree $e_i$ for each $i\in\{1,\ldots, k\}$. 
For $x\in \Lambda^\infty$ and $n\in \bN^k$, there is a unique element $\sigma^n(x)\in \Lambda^\infty$ defined by
\[
\sigma^n(x)(q,r)=x(n+q,n+r).
\]
That is, $\sigma^n$ is a shift map on $\Lambda^\infty$. If $\mu\in\Lambda$ and $x\in s(\mu)\Lambda^\infty$, then $\mu x$ 
is defined to be the unique infinite path such that 
$\mu x(0,n)=\mu\cdot x(0,n-d(\mu))$ for any $n\in \bN^k$ with $n\ge d(\mu)$. 
If $\sigma^m(x)=\sigma^n(x)$ for some $m\ne n$ in $\bN^k$, $x$ is said to be (eventually) \textit{periodic}.

\begin{defn}
A $k$-graph $\Lambda$ is said to be \textit{periodic} if there is $v\in\Lambda^0$ such that every $x\in v\Lambda^\infty$ is periodic. 
Otherwise, $\Lambda$ is called \textit{aperiodic}. 
\end{defn}

\subsection{$k$-graph C*-algebras}

For a given row-finite and source-free $k$-graph $\Lambda$, we associate to it a universal C*-algebra $\ca(\Lambda)$ as follows.

\begin{defn}
Let $\Lambda$ be a row-finite and source-free $k$-graph. A \textit{Cuntz-Krieger $\Lambda$-family} in a C*-algebra $\A$ is a family $\{S_\lambda:\lambda\in\Lambda\}$ in $\A$
such that 
\begin{itemize}
\item[(CK1)] $\{S_v\mid v\in\Lambda^0\}$ is a set of mutually orthogonal projections;
\item[(CK2)] $S_\mu S_\nu=S_{\mu\nu}$ whenever $s(\mu)=r(\nu)$;
\item[(CK3)] $S_\mu^* S_\nu=\delta_{\mu,\nu}\,S_{s(\mu)}$ for all $\mu,\nu\in\Lambda$ with $d(\mu)=d(\nu)$; 
\item[(CK4)] $S_v=\sum_{\lambda\in v\Lambda^n}S_\lambda S_\lambda^*$ for all $v\in\Lambda^0$ and $n\in \bN^k$.
\end{itemize}
The \textit{$k$-graph C*-algebra $\ca(\Lambda)$} is the universal C*-algebra among Cuntz-Krieger $\Lambda$-families. 
In this paper, we use $\{s_\mu\mid \mu\in\Lambda\}$ to denote the universal Cuntz-Krieger $\Lambda$-family of $\ca(\Lambda)$.
\end{defn}

It is known that
\[
\ca(\Lambda)=\ol\spn\{s_\mu s_\nu^*: \mu, \nu\in \Lambda\}.
\]
By the universal property of $\ca(\Lambda)$, there is a natural gauge action 
$\gamma$ of $\bT^k$ on $\ca(\Lambda)$ defined by
\[
\gamma_t(s_\lambda)=t^{d(\lambda)}s_\lambda\qforal t\in\bT^k,\ \lambda\in\Lambda.
\]
Here $t^n=t_1^{n_1}\cdots t_k^{n_k}$ for all $n=(n_1,\ldots, n_k)\in \bZ^k$.
Averaging over $\gamma$ gives a faithful conditional expectation $\Phi$ from $\ca(\Lambda)$ onto the fixed point algebra $\ca(\Lambda)^\gamma$,
known as the \textit{core} of $\ca(\Lambda)$. 
It turns out that  $\ca(\Lambda)^\gamma$ is an AF algebra and 
\[
\fF_\Lambda:=\ca(\Lambda)^\gamma=\ol\spn\{s_\mu s_\nu^*: d(\mu)=d(\nu)\}.
\]
For sake of simplicity, put
\[
P_\mu:=s_\mu s_\mu^*\qforal \mu\in\Lambda.
\]
The \textit{diagonal algebra} $\fD_\Lambda$ of $\ca(\Lambda)$ is defined as 
\[
\fD_\Lambda=\ol\spn\{s_\mu s_\mu^*: \mu\in\Lambda\}=\ol\spn\{P_\mu: \mu\in\Lambda\},
\]
which is a MASA in $\fF_\Lambda$, but, generally not a MASA in $\ca(\Lambda)$. 

For each $n=(n_1,\ldots, n_k)\in\bZ^k$, define a mapping $\Phi_n$ on $\ca(\Lambda)$ via
\[
\Phi_n(x)=\int_{\bT^k}t^{-n}\gamma_t(x) dt \quad \mbox{for all}\quad x\in\ca(\Lambda).
\]
Then $\Phi_n$ acts on the standard generators via
\[
\Phi_n(s_\mu s_\nu^*)=
\left\{
\begin{array}{ll}
s_\mu s_\nu^*, & \quad \mbox{if}\quad d(\mu)-d(\nu)=n,\\
0, &\quad \mbox{otherwise}.
\end{array}
\right.
\]
So $\fF_\Lambda$ coincides with $\ran\Phi_0$, and $\ran\Phi_n$ is
spanned by the standard generators in $\ca(\Lambda)$ of ``degree $n$".
Also, as directed graph algebras \cite{HPP05}, every $x\in\ca(\Lambda)$ has a (unique) formal series
\[
x\sim \sum_{n\in\bZ^k}\Phi_n(x),
\]
which is Abel  summable (refer to \cite{Taylo} for information on Abel summable).  
It is often useful heuristically to work directly with the series of $x$.

Row-finite and source-free $k$-graph C*-algebras can be also constructed via second countable, \'etale locally compact groupoids 
\[
\G_\Lambda=\big\{(x,m-n,y)\in \Lambda^\infty\times \bZ^k\times \Lambda^\infty: \sigma^m(x)=\sigma^n(y)\big\}
\]
(cf. \cite{KumPask}). 
The following facts are well-known: A basis for the topology of $\G_\Lambda$ is given by the open compact cylinder sets 
\[
Z(\alpha,\beta)=\big\{(\alpha x, d(\alpha)-d(\beta), \beta x): x\in s(\alpha)\Lambda^\infty\big\},
\]
where $\alpha,\beta\in\Lambda$ with $s(\alpha)=s(\beta)$;
$\G_\Lambda$ is amenable; and  $\ca(\Lambda)\cong \ca(\G_\Lambda)\cong\ca_r(\G_\Lambda)$.
From \cite{Ren80}, $\ca(\G_\Lambda)$ consists of some elements of $\rC_0(\G_\Lambda)$,
the continuous functions on $\G_\Lambda$ vanishing at infinity. But also notice that $\ca(\G_\Lambda)$ contains $\rC_c(\G_\Lambda)$, 
the continuous functions on $\G_\Lambda$ with compact support.

\subsection{Periodicity}
\label{SS:per}

Let $\Lambda$ be a row-finite and source-free $k$-graph. 
Define an equivalence relation $\sim$ on $\Lambda$ as follows:
\begin{align}
\label{D:sim}
\mu\sim\nu\Longleftrightarrow s(\mu)=s(\nu) \text{ and } \mu x=\nu x \text{ for all }x\in s(\mu)\Lambda^\infty.
\end{align}
If $\mu\sim\nu$, obviously one also has $r(\mu)=r(\nu)$ automatically. So $\sim$ respects sources and ranges. 

Associate to the equivalence relation $\sim$ an important set -- the \textit{periodicity $\Per\Lambda$ of $\Lambda$}:
\[
\Per\Lambda=\big\{d(\mu)-d(\nu): \xi,\eta\in\Lambda, \ \mu\sim\nu\big\}\subseteq\bZ^k.
\]
In general, $\Per\Lambda$ is a subset of $\bZ^k$ containing $0$. 
Furthermore, $\Lambda$ is aperiodic if and only if $\Per\Lambda=\{0\}$ (cf., e.g., \cite{Yan14}).

The subalgebra we are particularly interested in here is 
\[
\M_\Lambda=\ca(s_\mu s_\nu^*: \mu\sim \nu \in \Lambda),
\]
which plays a vital role in this paper. 
$\M_\Lambda$ is called the \textit{cycline subalgebra of $\ca(\Lambda)$} in \cite{BNR14}, since it is related to generalized cycles introduced in \cite{ES12}. 
Actually, it is defined in terms of cycline pairs in \cite{BNR14}. But it is the same as the one defined above by \cite[Proposition 4.1]{BNR14} and the definition of $\sim$ in \eqref{D:sim}.
Clearly, $\M_\Lambda$ contains $\fD_\Lambda$. Furthermore, $\M_\Lambda=\fD_\Lambda$ if and only if $\Lambda$ is 
aperiodic by the characterization of aperiodicity mentioned above. 
It is also shown in \cite{BNR14} that the relative commutant $\fD_\Lambda'$ of $\fD_\Lambda$ in $\ca(\Lambda)$ is abelian, and
that $\M_\Lambda\subseteq\M_\Lambda' = \fD'_\Lambda$.

Let us finish off this section by the following conventions. 

\smallskip

\noindent
\textbf{Conventions.}
Throughout the rest of this paper, 
\begin{center}
\textbf{all $k$-graphs are assumed to be row-finite and source-free} 
\end{center}
without any further mention. 

For simplicity, we write $\fD_\Lambda'$ to really mean the relative commutant 
\[
\fD_\Lambda'=\big\{A\in\ca(\Lambda): AD=DA\text{ for all } D\in \fD_\Lambda\big\}.
\]
 

\section{Cycline subalgebras are MASA}

\label{S:Lemmas}

Let $\Lambda$ be a $k$-graph, and $\M_\Lambda$ be the cycline subalgebra of $\ca(\Lambda)$. Recall from \cite{BNR14} that 
\[
\M_\Lambda=\ca(s_\mu s_\nu^*: \mu\sim \nu)=\ol\spn\big\{s_\mu s_\nu^*: \mu\sim \nu\big\}.
\]

Our main goal in this section is to prove $\M_\Lambda=\fD_\Lambda'$, which in particular affirmatively answers 
Q1 mentioned in Introduction: $\M_\Lambda$ is always
a MASA.  But four auxiliary lemmas are needed. 
The first one is directly from \cite{BNR14}. 

\begin{lem}
\label{L:equchar}
\cite[Proposition 4.1]{BNR14}
Let $\mu,\nu\in\Lambda$ with $s(\mu)=s(\nu)$. The following are equivalent: 
\begin{enumerate}
\item $s_\mu s_\nu^*$ is normal and commutes with $\fD_\Lambda$;
\item $\mu\sim \nu$.
\end{enumerate}
\end{lem}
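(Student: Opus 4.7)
The plan is to pass to the groupoid realisation $\ca(\Lambda) \cong \ca_r(\G_\Lambda)$ recalled in the preliminaries, under which $s_\mu s_\nu^*$ corresponds to the indicator function $1_{Z(\mu,\nu)}$ and each diagonal projection $P_\alpha$ corresponds to $1_{Z(\alpha,\alpha)}$, with $Z(\alpha,\alpha) \subset \G_\Lambda^{(0)} \cong \Lambda^\infty$. The guiding principle is the standard fact for \'etale groupoids: the indicator $1_{Z(\mu,\nu)}$ commutes with every function on the unit space (in particular with every $P_\alpha$) if and only if $Z(\mu,\nu)$ is contained in the isotropy bundle $\{g\in\G_\Lambda : r(g)=s(g)\}$.

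For (2) $\Rightarrow$ (1), assume $\mu\sim\nu$. The identities
\[
(s_\mu s_\nu^*)(s_\mu s_\nu^*)^* = s_\mu s_{s(\mu)} s_\mu^* = P_\mu, \qquad (s_\mu s_\nu^*)^*(s_\mu s_\nu^*) = s_\nu s_{s(\nu)} s_\nu^* = P_\nu,
\]
which follow from (CK1)--(CK3) and the standing hypothesis $s(\mu)=s(\nu)$, reduce normality to the equality $P_\mu=P_\nu$, and this is immediate since $\mu x = \nu x$ for all $x\in s(\mu)\Lambda^\infty$ gives $Z(\mu,\mu)=Z(\nu,\nu)$. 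The same pointwise equality forces every element $(\mu x,\, d(\mu)-d(\nu),\, \nu x)$ of $Z(\mu,\nu)$ to lie in the isotropy bundle, and so by the guiding principle $s_\mu s_\nu^*$ commutes with $\fD_\Lambda$.

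For (1) $\Rightarrow$ (2), the commutation hypothesis and the guiding principle (in the opposite direction) place $Z(\mu,\nu)$ inside the isotropy bundle, i.e., $\mu x=\nu x$ for every $x\in s(\mu)\Lambda^\infty$. Together with $s(\mu)=s(\nu)$ this is precisely $\mu\sim\nu$. Notice that normality is not even needed here; the commutation alone suffices, and normality comes along for free.

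The main obstacle is the rigorous justification of the guiding principle, which typically requires a density/approximation argument. Concretely, one must verify that commutation with the \emph{specific} generating set $\{P_\alpha : \alpha\in\Lambda\}$ of $\fD_\Lambda$ (rather than with all of $\rC_0(\Lambda^\infty)$) already pins down the support of $1_{Z(\mu,\nu)}$. This reduces to the topological fact that the cylinders $Z(\alpha,\alpha)$ form a basis of compact-open sets for the locally compact Hausdorff space $\Lambda^\infty$ and hence separate points; once this dictionary is in place, both implications are just a matter of unwinding definitions via the groupoid convolution formula.
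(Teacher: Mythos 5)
Your proposal is correct, but it is worth noting that the paper itself gives no proof of this lemma at all: it is imported verbatim from \cite[Proposition 4.1]{BNR14}, so your groupoid argument is a genuinely different (and self-contained) route. Your "guiding principle" is sound and the deferred justification is routine: for $g\in \rC_c(\G_\Lambda^{(0)})$ and $f\in\rC_c(\G_\Lambda)$ the convolution formulas $(f\ast g)(\gamma)=f(\gamma)\,g(s(\gamma))$ and $(g\ast f)(\gamma)=g(r(\gamma))\,f(\gamma)$ show that $1_{Z(\mu,\nu)}$ commutes with $\rC_0(\G_\Lambda^{(0)})$ exactly when every $(\mu x,d(\mu)-d(\nu),\nu x)$ satisfies $\mu x=\nu x$; since $\fD_\Lambda$ is identified with $\rC_0(\G_\Lambda^{(0)})$ (as the paper records in Section 4) and the cylinders $Z(\alpha)$ separate points of $\Lambda^\infty$ (if $x\ne y$ then $x(0,n)\ne y(0,n)$ for some $n$ by the factorization property), commuting with the projections $P_\alpha$ already forces the isotropy condition, and no further approximation is needed beyond continuity of multiplication. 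Two remarks on what each approach buys. First, your observation that normality comes for free from membership in $\fD_\Lambda'$ is exactly the content of the paper's Lemma \ref{L:normal}, which the author proves separately by an algebraic manipulation with the Cuntz--Krieger relations; your single groupoid computation delivers Lemma \ref{L:equchar} and Lemma \ref{L:normal} simultaneously. Second, the price of your route is that it leans on the isomorphism $\ca(\Lambda)\cong\ca_r(\G_\Lambda)$ carrying $s_\mu s_\nu^*$ to $1_{Z(\mu,\nu)}$ (Kumjian--Pask, amenability of $\G_\Lambda$), machinery the paper deliberately postpones until Section \ref{S:Cartan}, whereas the algebraic arguments of \cite{BNR14} and of Lemma \ref{L:normal} stay entirely inside the generators-and-relations picture. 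Modulo writing out the convolution computation you sketched, your proof is complete.
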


By Lemma \ref{L:equchar}, one has $\M_\Lambda\subseteq\fD_\Lambda'$. So in order to prove $\M_\Lambda=\fD_\Lambda'$, 
it is sufficient to verify $\fD_\Lambda'\subseteq\M_\Lambda$. 
Our first step is to prove that the standard generators in $\fD_\Lambda'$ belong to $\M_\Lambda$.
The following gives a strengthened version of Lemma \ref{L:equchar}, which will be very useful in what follows. 

\begin{lem}
\label{L:normal}
Let $\mu,\nu\in\Lambda$ with $s(\mu)=s(\nu)$. Then 
\[
s_\mu s_\nu^*\in\fD_\Lambda'\Longleftrightarrow \mu\sim\nu.
\]
\end{lem}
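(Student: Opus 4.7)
\textbf{Proof plan for Lemma \ref{L:normal}.} The statement upgrades Lemma \ref{L:equchar} by dropping the normality hypothesis, so the strategy is to recover normality from commutation with $\fD_\Lambda$ alone, then invoke Lemma \ref{L:equchar}.

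The direction ($\Leftarrow$) is immediate: if $\mu\sim\nu$, Lemma \ref{L:equchar} says $s_\mu s_\nu^*$ is normal and commutes with $\fD_\Lambda$; in particular $s_\mu s_\nu^*\in\fD_\Lambda'$. The content is the forward direction ($\Rightarrow$). First I would record the two identities that follow from $s(\mu)=s(\nu)$:
\[
(s_\mu s_\nu^*)(s_\mu s_\nu^*)^* = s_\mu s_{s(\nu)} s_\mu^* = P_\mu, \qquad (s_\mu s_\nu^*)^*(s_\mu s_\nu^*) = s_\nu s_{s(\mu)} s_\nu^* = P_\nu.
\]
Thus normality of $s_\mu s_\nu^*$ is precisely the equality $P_\mu=P_\nu$, and by Lemma \ref{L:equchar} it is enough to establish this equality.

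To get $P_\nu\le P_\mu$, I would use that $s_\mu s_\nu^*$ commutes with $P_\mu\in\fD_\Lambda$. Since $P_\mu s_\mu=s_\mu$, one always has $P_\mu(s_\mu s_\nu^*)=s_\mu s_\nu^*$, so commutation forces
\[
s_\mu s_\nu^* \;=\; s_\mu s_\nu^* P_\mu \;=\; s_\mu s_\nu^* s_\mu s_\mu^*.
\]
Multiplying on the left by $s_\nu s_\mu^*$ and collapsing with $s_\mu^* s_\mu=s_{s(\mu)}=s_{s(\nu)}$ yields $P_\nu=P_\nu P_\mu$, hence $P_\nu\le P_\mu$. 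By the symmetric calculation with the roles of $\mu$ and $\nu$ swapped (using $s_\mu s_\nu^* P_\nu=s_\mu s_\nu^*$ together with commutation with $P_\nu$), one obtains $P_\mu\le P_\nu$. Combining gives $P_\mu=P_\nu$, so $s_\mu s_\nu^*$ is normal, and Lemma \ref{L:equchar} then yields $\mu\sim\nu$.

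I do not anticipate a serious obstacle: the whole argument is a short computation with the Cuntz--Krieger relations, the only small subtlety being the careful use of $s(\mu)=s(\nu)$ when collapsing vertex projections on the source side. The conceptual point worth highlighting in the write-up is that commutation with just the two diagonal projections $P_\mu$ and $P_\nu$ already forces normality, so the extra normality hypothesis in Lemma \ref{L:equchar} is automatic for standard generators lying in $\fD_\Lambda'$.
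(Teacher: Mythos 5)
Your argument is correct and follows essentially the same route as the paper: both reduce the forward direction to showing that commutation with $P_\mu$ and $P_\nu$ forces $P_\mu=P_\nu$ (hence normality), by the identities $s_\mu s_\nu^*=s_\mu s_\nu^*P_\mu$ and the symmetric one, and then invoke Lemma \ref{L:equchar}; your single left-multiplication by $s_\nu s_\mu^*$ is just a compressed version of the paper's two-step multiplication by $s_\mu^*$ and then $s_\nu$. No gaps.
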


\begin{proof}
By Lemma \ref{L:equchar}, it is enough to prove that $s_\mu s_\nu^*\in \fD_\Lambda'$ implies that $s_\mu s_\nu^*$ is automatically normal, namely, 
\[
s_\mu s_\mu^*=s_\nu s_\nu^*, \text{ i.e., } P_\mu=P_\nu.
\]

Assume now that $\mu,\nu\in\Lambda$ with $s(\mu)=s(\nu)$ satisfies $s_\mu s_\nu^*\in\fD_\Lambda'$. Then one has the following implications: 
\begin{align*}
&s_\mu s_\mu^*\cdot s_\mu s_\nu^*=s_\mu s_\nu^*\cdot s_\mu s_\mu^*\\
\Rightarrow\ & s_\mu s_\nu^*=s_\mu s_\nu^*s_\mu s_\mu^*\\
\Rightarrow\ & s_\mu^* s_\mu s_\nu^*= s_\mu^* s_\mu s_\nu^*s_\mu s_\mu^*\\
\Rightarrow\ & s_{s(\mu)} s_\nu^*=s_{s(\mu)} s_\nu^* P_\mu\\
\Rightarrow\ & s_\nu^*=s_\nu^*P_\mu\\
\Rightarrow\ & s_\nu s_\nu^*=s_\nu s_\nu^*P_\mu\\
\Rightarrow\ & P_\nu= P_\nu P_\mu. 
\end{align*}

Clearly $s_\mu s_\nu^*\in \fD_\Lambda'$ implies $s_\nu s_\mu^*\in\fD_\Lambda'$. So switching $\mu$ and $\nu$ in the above process gives 
$P_\mu= P_\mu P_\nu$. Hence $P_\mu= P_\nu$ as $P_\mu P_\nu=P_\nu P_\mu$. This ends our proof.
\end{proof}

Roughly speaking, the next lemma says that, for our purpose, it is enough to consider the elements of $\fD_\Lambda'$ of degree $n\in\bZ^k$.

\begin{lem}
\label{L:Phin}
Let $A\in \ca(\Lambda)$. Then $A\in \fD_\Lambda'$ if and only if $\Phi_n(A)\in \fD_\Lambda'$ for all $n\in \bZ^k$.
\end{lem}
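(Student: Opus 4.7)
The plan is to exploit the fact that $\fD_\Lambda$ lies inside the fixed point algebra of the gauge action $\gamma$. Since $\gamma_t(P_\mu)=t^{d(\mu)}t^{-d(\mu)}P_\mu=P_\mu$ for every $\mu\in\Lambda$, every $D\in\fD_\Lambda$ satisfies $\gamma_t(D)=D$ for all $t\in\bT^k$. From this I would deduce the following key ``bimodule'' property of the Fourier projection: for any $A\in\ca(\Lambda)$ and any $D\in\fD_\Lambda$,
\[
\Phi_n(AD)=\Phi_n(A)\,D\qand \Phi_n(DA)=D\,\Phi_n(A).
\]
Indeed, $\gamma_t(AD)=\gamma_t(A)\gamma_t(D)=\gamma_t(A)D$, so pulling $D$ out of the integral defining $\Phi_n$ is immediate, and symmetrically for $\Phi_n(DA)$.

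With this in hand the forward direction is essentially automatic. If $A\in\fD_\Lambda'$ then for any $D\in\fD_\Lambda$,
\[
\Phi_n(A)D=\Phi_n(AD)=\Phi_n(DA)=D\Phi_n(A),
\]
so $\Phi_n(A)\in\fD_\Lambda'$ for each $n\in\bZ^k$.

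For the converse, assume $\Phi_n(A)\in\fD_\Lambda'$ for every $n\in\bZ^k$, and let $D\in\fD_\Lambda$. Applying the bimodule property to $AD-DA$ gives
\[
\Phi_n(AD-DA)=\Phi_n(A)D-D\Phi_n(A)=0\qforal n\in\bZ^k.
\]
I would then invoke the standard fact that an element of $\ca(\Lambda)$ whose Fourier coefficients $\Phi_n(\cdot)$ all vanish must be zero: this can be seen from the Abel summability mentioned in the Preliminaries, or equivalently from Cesàro summability of $\sum_n\Phi_n(X)$ to $X$ in norm (Fejér's theorem for the compact abelian group $\bT^k$). Concluding $AD-DA=0$ for all $D\in\fD_\Lambda$ gives $A\in\fD_\Lambda'$.

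The only genuine obstacle is the last step — confirming that vanishing of all $\Phi_n$ on an element forces it to be zero. This is standard for actions of compact groups on C*-algebras (it follows, for instance, from the faithfulness of $\Phi_0$ applied to $\gamma_t(X)^*\gamma_t(X)$, or directly by Cesàro means), and should be cited rather than reproved.
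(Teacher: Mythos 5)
Your proof is correct, and for the direction the paper actually writes out (namely $A\in\fD_\Lambda'\Rightarrow\Phi_n(A)\in\fD_\Lambda'$) it is the same argument: use $\gamma_t(D)=D$ for $D\in\fD_\Lambda$ to pull $D$ through the averaging integral defining $\Phi_n$. The only difference is in the converse, which the paper dismisses with ``it suffices to show the only if part'': implicitly it relies on $A$ being recovered from its Fourier coefficients (Abel/Ces\`aro summability of $\sum_n\Phi_n(A)$) together with $\fD_\Lambda'$ being a closed subspace, so that $A$ is a norm limit of combinations of the $\Phi_n(A)\in\fD_\Lambda'$. You instead apply the same summability fact to the commutator $AD-DA$, whose Fourier coefficients vanish by hypothesis, forcing $AD-DA=0$; this is an equally valid and essentially equivalent justification, resting on the same standard uniqueness-of-Fourier-coefficients fact for actions of $\bT^k$, which is fine to cite rather than reprove.
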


\begin{proof}
It suffices to show the ``only if" part. Let $D\in \fD_\Lambda$. Then for all $n\in\bZ^k$ one has
\begin{align*}
\Phi_n(A)D
&=\int_{\bT^k} t^{-n}\gamma_t(A)\, dt\, D\\
&=\int_{\bT^k} t^{-n}\gamma_t(A)\gamma_t(D)\, dt\quad (\text{as }\gamma_t(D)=D)\\
&=\int_{\bT^k} t^{-n}\gamma_t(AD)\, dt\\
&=\int_{\bT^k} t^{-n}\gamma_t(DA)\, dt\quad (\text{as }A\in \fD_\Lambda')\\
&=D\int_{\bT^k} t^{-n}\gamma_t(A)\, dt\quad (\text{as }\gamma_t(D)=D)\\
&=D\Phi_n(A).
\end{align*}
This proves $\Phi_n(A)\in\fD_\Lambda'$. 
\end{proof}

It turns out that, for $n\in\bZ^k$, any element in $\fD_\Lambda'$ of degree $n$ in a certain ``canonical" form is very 
special: It essentially has only one term. 
This is not surprising if one keeps the uniqueness result \cite[Lemma 4.1]{Yan14} in mind. 

\begin{lem}
\label{L:oneterm}
Let $m,n\in \bN^k$ and 
\[
\displaystyle A=\sum_{d(\mu)=m,\, d(\nu)=n, \, s(\mu)=s(\nu)} a_{\mu,\nu}\, s_\mu s_\nu^*\in \ca(\Lambda).
\]
Then the following hold true:
\begin{enumerate}
\item
 If $A\in\fD_\Lambda'$, 
then, for each $\mu\in\Lambda^m$, there is a unique $\nu\in\Lambda^n$ such that $a_{\mu,\nu}\ne 0$. 
\item
 If $A\in\fD_\Lambda'$, 
then, for each $\nu\in\Lambda^n$, there is a unique $\mu\in\Lambda^m$ such that $a_{\mu,\nu}\ne 0$. 
\end{enumerate}
\end{lem}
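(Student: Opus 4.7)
The plan is to exploit the relation $P_\sigma A = A P_\sigma$ for every projection $P_\sigma = s_\sigma s_\sigma^* \in \fD_\Lambda$ with $\sigma \in \Lambda^p$, taking $p := m \vee n$, and extract constraints on $\{a_{\mu,\nu}\}$ by comparing coefficients in a common basis. A preliminary ingredient I would verify is linear independence of $\{s_\alpha s_\beta^* : d(\alpha)=M,\, d(\beta)=N,\, s(\alpha)=s(\beta)\}$ in $\ca(\Lambda)$ for every $M,N\in\bN^k$; this follows by left-multiplying any putative dependence by $s_{\alpha_0}^*$ and right-multiplying by $s_{\beta_0}$, using (CK2) and (CK3) to isolate $a_{\alpha_0,\beta_0}$.

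For each $\sigma\in\Lambda^p$, the factorization property provides $\sigma=\mu_\sigma\tau_\sigma=\nu_\sigma\tau'_\sigma$ with $d(\mu_\sigma)=m$ and $d(\nu_\sigma)=n$. Applying (CK3) collapses $s_\sigma^*s_\mu$ and $s_\nu^*s_\sigma$ to single tail terms, yielding
\[
P_\sigma A = \sum_{\nu} a_{\mu_\sigma,\nu}\, s_\sigma s_{\nu\tau_\sigma}^* \qand A P_\sigma = \sum_{\mu} a_{\mu,\nu_\sigma}\, s_{\mu\tau'_\sigma} s_\sigma^*.
\]
These expressions live in different degree pairs $(p,\,n+p-m)$ and $(m+p-n,\,p)$. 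Expanding each side via (CK4) $s_{s(\sigma)}=\sum_\eta s_\eta s_\eta^*$, the left over $\eta\in s(\sigma)\Lambda^{(m-n)_+}$ and the right over $\eta'\in s(\sigma)\Lambda^{(n-m)_+}$, brings both into the common degree pair $(p+(m-n)_+,\, p+(n-m)_+)$, as sums of typical generators $s_{\sigma\eta}s_{\nu\tau_\sigma\eta}^*$ and $s_{\mu\tau'_\sigma\eta'}s_{\sigma\eta'}^*$ respectively.

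By the linear-independence fact, $P_\sigma A = A P_\sigma$ reduces to term-by-term equality. Using uniqueness of factorization, a left generator $(\sigma\eta,\nu\tau_\sigma\eta)$ matches a right generator $(\mu\tau'_\sigma\eta',\sigma\eta')$ precisely when $\mu=\mu_\sigma$, $\nu=\nu_\sigma$, and $\tau_\sigma\eta=\tau'_\sigma\eta'$. Hence every left generator with $\nu\ne\nu_\sigma$ has no right counterpart and forces $a_{\mu_\sigma,\nu}=0$; symmetrically every right generator with $\mu\ne\mu_\sigma$ forces $a_{\mu,\nu_\sigma}=0$. For fixed $\mu\in\Lambda^m$, letting $\sigma$ range over $\mu\Lambda^{p-m}$: either the map $\sigma\mapsto\nu_\sigma=\sigma(0,n)$ takes a single value (so at most one $\nu$ can satisfy $a_{\mu,\nu}\ne 0$) or two extensions produce distinct candidates and the combined constraints annihilate the whole row. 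Either way (i) holds. Part (ii) follows by applying (i) to $A^*=\sum\overline{a_{\mu,\nu}}\, s_\nu s_\mu^*\in\fD_\Lambda'$, whose degree pair is $(n,m)$.

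The main technical hurdle is the asymmetric (CK4) expansion when $m$ and $n$ are not coordinate-wise comparable: one has to track carefully which basis elements on each side match and, by invoking uniqueness of factorization, rule out accidental coincidences between left-only and right-only generators.
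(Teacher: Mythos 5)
Your argument is correct, but it is not the route the paper takes, so a comparison is in order. The paper fixes a pair $(\mu_0,\nu_0)$ with $a_{\mu_0,\nu_0}\neq 0$, uses only the commutation of $A$ with the single projection $P_{\mu_0}$ (and, by symmetry, $P_{\nu_0}$) to derive the identity $P_{\mu_0}=P_{\nu_0}$, and then finishes with a C*-positivity trick: writing $s_{\mu_0}^*A$ in two ways and computing $XX^*$ of both expressions, (CK3) together with $P_{\mu_0}=P_{\nu_0}$ collapses the right-hand side to the single term $|a_{\mu_0,\nu_0}|^2 s_{s(\nu_0)}$, so the remaining coefficients in the row must vanish because a sum of positive terms equals one of its summands. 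You instead commute $A$ with the projections $P_\sigma$ for \emph{all} $\sigma\in\Lambda^{m\vee n}$, rebalance the two sides into a common bidegree via (CK4), and compare coefficients term by term, using linear independence of the fixed-bidegree generators (your independence argument is fine, given the standard fact $s_v\neq 0$, and amounts to the continuous coefficient extraction $x\mapsto s_\alpha^* x s_\beta$) and the unique factorization property to rule out accidental matches; source-freeness, a standing hypothesis, guarantees the extensions $\sigma$ and $\eta$ you need exist. Your approach avoids the positivity step entirely and yields a stronger intermediate statement -- for every extension $\sigma=\mu\tau$ of degree $m\vee n$, the only possible partner of $\mu$ is $\sigma(0,n)$ -- which already foreshadows the rigidity $\mu\sim\nu$ established in Lemma \ref{L:normal}; the cost is the heavier combinatorial bookkeeping with the asymmetric (CK4) expansion, which the paper's shorter algebraic computation sidesteps. (Like the paper, you prove only the uniqueness, not the literal existence, of the partner $\nu$, which is all that is used in Theorem \ref{T:MASA}.)
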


\begin{proof}
It suffices to show (i), since once (i) is established (ii) follows by applying (i) to $A^*$.

Let 
\[
\displaystyle A=\sum_{d(\mu)=m,\, d(\nu)=n,\, s(\mu)=s(\nu)} a_{\mu,\nu}\, s_\mu s_\nu^*\in \fD_\Lambda'.
\] 
Notice that $s_\mu s_\nu^*\ne 0$ as $s(\mu)=s(\nu)$.
Assume that $\mu_0\in\Lambda^m$ is such that $a_{\mu_0,\nu_0}\ne 0$ for some $\nu_0\in\Lambda^n$. Then we must show the uniqueness of $\nu_0$.

Since $A\in\fD_\Lambda'$, we have 
\[
s_{\mu_0}s_{\mu_0}^*A=A s_{\mu_0}s_{\mu_0}^*.
\]  
Multiplying $s_{\mu_0}^*$ from the left at both sides in the above identity induces
\[
s_{\mu_0}^*A=s_{\mu_0}^*As_{\mu_0}s_{\mu_0}^*
\]
as $s_{\mu_0}$ is a partial isometry.
Then we expand it using the formula of $A$ and then calculate both sides to obtain
\begin{align}
\label{E:uni}
\sum_{\nu\in \Lambda^ns(\mu_0)} a_{\mu_0,\nu}\, s_\nu^*
= \sum_{\nu\in \Lambda^ns(\mu_0)} a_{\mu_0,\nu}\, s_\nu^* s_{\mu_0}s_{\mu_0}^*.
\end{align}
Multiplying $s_{\nu_0}$ from right at both sides of \eqref{E:uni} and using (CK3) yields
\begin{align*}
&a_{\mu_0,\nu_0}s_{\nu_0}^*s_{\nu_0}= \sum_{\nu\in \Lambda^ns(\mu_0)} a_{\mu_0,\nu}\, s_\nu^* s_{\mu_0}s_{\mu_0}^* s_{\nu_0}\\
\Rightarrow\ &
a_{\mu_0,\nu_0}s_{\nu_0}\cdot s_{\nu_0}^*s_{\nu_0}\cdot s_{\nu_0}^*
= \sum_{\nu\in \Lambda^ns(\mu_0)} a_{\mu_0,\nu}\, s_{\nu_0}\cdot  s_\nu^* s_{\mu_0}s_{\mu_0}^* s_{\nu_0}\cdot s_{\nu_0}^*\\
\Rightarrow\ &
a_{\mu_0,\nu_0}s_{\nu_0}s_{\nu_0}^*
= \sum_{\nu\in \Lambda^ns(\mu_0)} a_{\mu_0,\nu}\, s_{\nu_0} s_\nu^* s_{\nu_0} s_{\nu_0}^*s_{\mu_0}s_{\mu_0}^*\quad (\text{as }P_{\mu_0} P_{\nu_0} =P_{\nu_0} P_{\mu_0})\\
\Rightarrow\ &
a_{\mu_0,\nu_0}s_{\nu_0}s_{\nu_0}^*
= a_{\mu_0,\nu_0}\, s_{\nu_0} s_{\nu_0}^*s_{\mu_0}s_{\mu_0}^*\\
\Rightarrow\ &
P_{\nu_0}=P_{\nu_0}P_{\mu_0}\quad(\text{by (CK3)} \text{ and }a_{\mu_0,\nu_0}\ne 0).
\end{align*}

Completely similar reasoning (by considering $A^*$ instead of $A$) gives $P_{\mu_0}=P_{\mu_0}P_{\nu_0}$.
Therefore we so far have shown that, for \textit{any} $\mu_0,\nu_0$ such that $a_{\mu_0,\nu_0}\ne 0$, we have
\begin{align}
\label{E:Pu=Pv}
P_{\mu_0}=P_{\nu_0}. 
\end{align}
From \eqref{E:uni} one also induces that
\begin{align*}
&\left(\sum_{\nu\in \Lambda^ns(\mu_0)} a_{\mu_0,\nu}\, s_\nu^*\right)  \left(\sum_{\nu\in \Lambda^ns(\mu_0)} a_{\mu_0,\nu}\, s_\nu^*\right)^*\\
=&\left(\sum_{\nu\in \Lambda^ns(\mu_0)} a_{\mu_0,\nu}\, s_\nu^* s_{\mu_0}s_{\mu_0}^*\right)\left( \sum_{\nu\in \Lambda^ns(\mu_0)} a_{\mu_0,\nu}\, s_\nu^* s_{\mu_0}s_{\mu_0}^*\right)^*.
\end{align*}
Expanding both sides and then using \eqref{E:Pu=Pv} and (CK3), we obtain
\begin{align*}
\sum_{\nu\in \Lambda^ns(\mu_0)} |a_{\mu_0,\nu}|^2 s_{s(\nu)}&=
\sum_{\nu_1,\,\nu_2\in \Lambda^ns(\mu_0)} a_{\mu_0,\nu_1}\ol a_{\mu_0,\nu_2}\, s_{\nu_1}^* s_{\mu_0}s_{\mu_0}^*   s_{\mu_0}s_{\mu_0}^* s_{\nu_2}\\
&=\sum_{\nu_1,\, \nu_2\in \Lambda^ns(\mu_0)} a_{\mu_0,\nu_1}\ol a_{\mu_0,\nu_2}\, s_{\nu_1}^*\cdot s_{\mu_0}s_{\mu_0}^* \cdot s_{\nu_2}\\
&=\sum_{\nu_1,\, \nu_2\in \Lambda^ns(\mu_0)} a_{\mu_0,\nu_1}\ol a_{\mu_0,\nu_2}\, s_{\nu_1}^*\cdot s_{\nu_0}s_{\nu_0}^*\cdot s_{\nu_2}\quad (\text{by }\eqref{E:Pu=Pv})\\
&= |a_{\mu_0,\nu_0}|^2 s_{\nu_0}^* s_{\nu_0}s_{\nu_0}^* s_{\nu_0}\quad(\text{from (CK3)})\\
&=|a_{\mu_0,\nu_0}|^2 s_{s(\nu_0)}.
\end{align*}
Therefore, $a_{\mu_0,\nu}=0$ for all $\nu\ne \nu_0$, proving the uniqueness of $\nu_0$. 
\end{proof}

We are now ready to prove our main result in this section. 

\begin{thm}
\label{T:MASA}
Let $\Lambda$ be a $k$-graph, and $\M_\Lambda$ be the cycline algebra of $\ca(\Lambda)$. 
Then $\M_\Lambda=\fD_\Lambda'$. In particular, $\M_\Lambda$ is a MASA in $\ca(\Lambda)$.
\end{thm}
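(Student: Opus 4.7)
The plan is to prove the nontrivial inclusion $\fD_\Lambda'\subseteq\M_\Lambda$; the reverse inclusion is immediate from Lemma~\ref{L:equchar} together with the norm-closedness of $\M_\Lambda$. Once $\M_\Lambda=\fD_\Lambda'$ is established, the MASA property follows formally: $\M_\Lambda$ is abelian (sitting inside the abelian $\fD_\Lambda'$), so $\M_\Lambda\subseteq\M_\Lambda'$, and from $\fD_\Lambda\subseteq\M_\Lambda$ we obtain $\M_\Lambda'\subseteq\fD_\Lambda'=\M_\Lambda$, whence $\M_\Lambda=\M_\Lambda'$.

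Fix $A\in\fD_\Lambda'$. By Lemma~\ref{L:Phin}, each $\Phi_n(A)$ lies in $\fD_\Lambda'$; since $A$ is Abel summable from $\sum_{n\in\bZ^k}\Phi_n(A)$ while $\M_\Lambda$ is norm closed under finite linear combinations, it suffices to show $\Phi_n(A)\in\M_\Lambda$ for each $n\in\bZ^k$. Fix $n$, write $n=m-m'$ with $m,m'\in\bN^k$, and set $B:=\Phi_n(A)$. Using (CK4) to equalize degrees, approximate $B$ in norm by finite sums
\[
B_\epsilon=\sum_{d(\mu)=m_\epsilon,\,d(\nu)=m_\epsilon-n}a^{(\epsilon)}_{\mu,\nu}\,s_\mu s_\nu^*
\]
at some common level $m_\epsilon$. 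Split $B_\epsilon=B_\epsilon^{\sim}+B_\epsilon^{\not\sim}$ according to whether $\mu\sim\nu$ or not; by construction $B_\epsilon^{\sim}\in\M_\Lambda$, so showing $\|B_\epsilon^{\not\sim}\|\to 0$ will give $B=\lim_\epsilon B_\epsilon^{\sim}\in\M_\Lambda$.

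The key input for controlling $\|B_\epsilon^{\not\sim}\|$ is that $B\in\fD_\Lambda'$ forces $\|P_{\mu_0}B_\epsilon-B_\epsilon P_{\mu_0}\|\to 0$ for every $\mu_0\in\Lambda^{m_\epsilon}$; rerunning the partial-isometry manipulations of Lemma~\ref{L:oneterm} quantitatively then constrains the off-diagonal coefficients of $B_\epsilon$ to vanish in the limit. The main obstacle is that $B_\epsilon$ itself need not lie in $\fD_\Lambda'$, so Lemma~\ref{L:oneterm} cannot be applied verbatim: one has to track the propagation of the $\epsilon$-error through the (CK3)-based calculations. A cleaner alternative is to bypass the approximation altogether: for $d(\mu)=m$, $d(\nu)=m'$, $s(\mu)=s(\nu)$, compute the matrix coefficient $s_\mu^*Bs_\nu\in\fF_\Lambda$ directly, and use $B\in\fD_\Lambda'$ to show, by an argument parallel to Lemma~\ref{L:oneterm}, that $s_\mu^*Bs_\nu$ is a scalar multiple of $s_{s(\mu)}$ that vanishes unless $\mu\sim\nu$; this constraint on the matrix coefficients eliminates the off-diagonal content of every approximation $B_\epsilon$, so that $B$ is approximated in norm by elements of $\M_\Lambda$ and therefore lies in $\M_\Lambda$.
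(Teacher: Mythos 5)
Your reduction steps are fine (the reverse inclusion via Lemma~\ref{L:equchar}, the formal MASA argument, and the reduction via Lemma~\ref{L:Phin} and Abel summability to showing that each pure-degree element $B=\Phi_n(A)\in\fD_\Lambda'$ lies in $\M_\Lambda$), but the heart of the theorem is exactly that last step, and neither of your two routes establishes it. Route one you concede is incomplete, and the concession is not cosmetic: at level $m_\epsilon$ the number of terms is unbounded, so even if a quantitative version of Lemma~\ref{L:oneterm} forced each off-diagonal coefficient of $B_\epsilon$ to be small, that would not by itself make $\|B_\epsilon^{\not\sim}\|$ small; no mechanism for this is offered. Route two rests on a claim that is false: it is not true that $s_\mu^*Bs_\nu$ is a scalar multiple of $s_{s(\mu)}$ for $B\in\fD_\Lambda'\cap\ran\Phi_n$. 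Take the single-vertex $2$-graph with blue edges $b_1,b_2$, one red edge $r$, and commuting squares $b_ir=rb_i$; here $\mu\sim\nu$ exactly when $\mu,\nu$ have the same blue word, so $B=s_r+s_{b_1r}s_{b_1}^*$ is a sum of cycline generators, hence lies in $\M_\Lambda\subseteq\fD_\Lambda'$ and has pure degree $n=(0,1)$. With $\mu=r$, $\nu=v$ one computes $s_\mu^*Bs_\nu=s_v+P_{b_1}$, which is an element of $\fD_\Lambda$ but not a scalar multiple of $s_v$ (and passing to higher levels $m,m'$ does not repair this for infinite sums such as $B=\sum_{j\ge1}2^{-j}s_{b_1^jr}s_{b_1^j}^*$). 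The most one can hope for is that such compressions lie in $\fD_\Lambda$ and vanish off cycline pairs, and proving that, together with reassembling $B$ from its compressions, is essentially the whole content of the theorem --- an argument "parallel to Lemma~\ref{L:oneterm}" does not exist verbatim, because the manipulations there use crucially that $A$ is a finite sum at a single bi-degree $(m,n)$, which is what makes the (CK3) cross terms collapse to scalars.

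For comparison, the paper closes this gap with a structural input your proposal does not have: since $\fD_\Lambda'$ is a norm-closed, gauge-invariant $\fD_\Lambda$-bimodule, a spectral-theorem-for-bimodules argument in the style of \cite{HPP05, Hop05} shows that $\fD_\Lambda'$ is the closed span of the standard generators $s_\mu s_\nu^*$ that it contains. This reduces the problem to finite linear combinations that genuinely lie in $\fD_\Lambda'$, where Lemma~\ref{L:oneterm} applies verbatim and Lemma~\ref{L:normal} then identifies each surviving generator as cycline. To complete your argument you would either need to invoke such a bimodule spectral theorem (or the equivalent groupoid-support description of $\fD_\Lambda'$), or supply a genuinely quantitative substitute for Lemma~\ref{L:oneterm}; as written, the proposal has a gap precisely where the real difficulty lies.
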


\begin{proof}
We first prove $\M_\Lambda=\fD_\Lambda'$. The inclusion $\M_\Lambda\subseteq \fD_\Lambda'$ is clearly from Lemma \ref{L:equchar} (ii) $\Rightarrow$ (i).
So we must show $\fD_\Lambda'\subseteq \M_\Lambda$ in what follows.

It is easy to verify that $\fD_\Lambda'$ is a gauge invariant $\fD_\Lambda$-bimodule. 
Using an argument similar to \cite[Theorem 3.1]{HPP05} (also cf. \cite{Hop05}), one can show that $\fD_\Lambda'$ is 
generated by the standard generators $s_\mu s_\nu^*$ which it contains.
Let $\fA$ be the `algebraic' part of $\fD_\Lambda'$. That is, 
$\fA$ is the algebra of the \textit{finite} linear span of those standard generators. Then $\fA$ is dense in $\fD_\Lambda'$. 
So, for our purpose, it suffices to show that $\fA\subseteq \M_\Lambda$.
For this, let $A\in\fA$. 
By Lemma \ref{L:Phin}, without loss of generality, let us assume that $A$ is of degree $n$ for some $n\in \bZ^k$: 
$A=\Phi_n(A)$. Recall that $A$ is just a (finite) linear combination of some generators $s_\mu s_\nu^*$'s.
Using the defect-free property (CK4), one can now simply write $A$ as follows:
\[
\displaystyle A=\sum_{d(\mu)=m,\, d(\nu)=n,\, s(\mu)=s(\nu)} a_{\mu,\nu}\, s_\mu s_\nu^*,
\]
where $m,n$ are two fixed elements in $\bN^k$, and $a_{\mu,\nu}\ne 0$. 
It then follows from Lemma \ref{L:oneterm} 
for a fixed $\nu\in\Lambda^n$, there is a unique $\mu\in\Lambda^m$ such that $a_{\mu,\, \nu}\ne 0$.
Then 
\[
A s_\nu s_\nu^*=\left(\sum a_{\mu,\nu}\, s_\mu s_\nu^*\right)s_\nu s_\nu^*=a_{\mu,\nu} s_\mu s_\nu^*.
\]
Clearly $A s_\nu s_\nu^*\in\fD_\Lambda'$ as $A\in\fD_\Lambda'$ and $s_\nu s_\nu^*\in \fD_\Lambda$. So $s_\mu s_\nu^*\in\fD_\Lambda'$. 
By Lemma \ref{L:normal}, $\mu\sim \nu$, which implies that $A\in \M_\Lambda$. Therefore, $\M_\Lambda=\fD_\Lambda'$. 

The second part of the theorem follows immediately, since it is known that $\fD_\Lambda'$ is abelian and $\M_\Lambda'=\fD_\Lambda'$ (\cite[Proposition 7.3]{BNR14}).
\end{proof}

By Theorem \ref{T:MASA}, one can easily recover the following characterization of aperiodicity, one of the main theorems 
in \cite{Hop05}. 

\begin{cor}
\label{C:masa}
A $k$-graph $\Lambda$ is aperiodic if and only if the diagonal algebra $\fD_\Lambda$ is a MASA in $\ca(\Lambda)$. 
\end{cor}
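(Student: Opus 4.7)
The plan is to obtain this corollary as an immediate consequence of Theorem \ref{T:MASA} combined with the characterization $\M_\Lambda = \fD_\Lambda \Longleftrightarrow \Lambda$ aperiodic, which was recorded in Subsection \ref{SS:per}. Since $\fD_\Lambda$ is automatically abelian and contained in $\fD_\Lambda'$, the statement ``$\fD_\Lambda$ is a MASA in $\ca(\Lambda)$'' is by definition equivalent to $\fD_\Lambda = \fD_\Lambda'$. By Theorem \ref{T:MASA}, $\fD_\Lambda' = \M_\Lambda$, so it suffices to observe that $\fD_\Lambda = \M_\Lambda$ if and only if $\Lambda$ is aperiodic.

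For the sake of being self-contained, I would briefly indicate why $\fD_\Lambda = \M_\Lambda \Longleftrightarrow \Lambda$ is aperiodic. If $\Lambda$ is aperiodic then $\Per\Lambda = \{0\}$, so any $\mu \sim \nu$ satisfies $d(\mu) = d(\nu)$; the factorization property together with $\mu x = \nu x$ for all $x \in s(\mu)\Lambda^\infty$ then forces $\mu = \nu$, hence $s_\mu s_\nu^* = P_\mu \in \fD_\Lambda$, giving $\M_\Lambda \subseteq \fD_\Lambda$ (the reverse inclusion is trivial). Conversely, if $\Lambda$ is periodic there exist $\mu \sim \nu$ with $d(\mu) \ne d(\nu)$; then $s_\mu s_\nu^* \in \M_\Lambda$ has a nontrivial homogeneous component of degree $d(\mu) - d(\nu) \ne 0$ under the gauge action, so it cannot lie in $\fD_\Lambda \subseteq \fF_\Lambda$.

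No step is expected to be difficult — the whole content of the corollary is packaged into Theorem \ref{T:MASA}, and the only thing to be careful about is invoking the equivalence $\M_\Lambda = \fD_\Lambda \Longleftrightarrow \Lambda$ aperiodic cleanly. Thus the proof is essentially a one-line chain of equivalences: $\fD_\Lambda$ is MASA $\Longleftrightarrow$ $\fD_\Lambda = \fD_\Lambda' \Longleftrightarrow \fD_\Lambda = \M_\Lambda \Longleftrightarrow \Lambda$ aperiodic.
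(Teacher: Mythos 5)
Your proof is correct and follows essentially the same route as the paper: both reduce the statement to the chain $\fD_\Lambda$ MASA $\Leftrightarrow \fD_\Lambda=\fD_\Lambda' \Leftrightarrow \fD_\Lambda=\M_\Lambda \Leftrightarrow \Lambda$ aperiodic, using Theorem \ref{T:MASA} and the characterization of aperiodicity via $\Per\Lambda=\{0\}$ recorded in the preliminaries. The only difference is that you spell out the last equivalence directly, where the paper cites \cite{RS07,Yan14}; your added details are fine.
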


\begin{proof}
$\fD_\Lambda$ is a MASA in $\ca(\Lambda)$, if and only if $\fD_\Lambda'=\fD_\Lambda$, if and only if $\M_\Lambda=\fD_\Lambda$ by Theorem \ref{T:MASA}, 
if and only if no $\mu\ne \nu$ such that $\mu\sim \nu$ by definition of $\M_\Lambda$, if and only if $\Per\Lambda=\{0\}$ by definition of $\Per\Lambda$,
if and only if $\Lambda$ is aperiodic  (see, e.g., \cite{RS07} or \cite{Yan14}).  
\end{proof}


\section{When are Cyline subalgebras Cartan}

\label{S:Cartan}

Let $\B$ be an abelian C*-subalgebra of a given C*-algebra $\A$. 
Recall from \cite{Ren08} that $\B$ is a \textit{Cartan subalgebra} in $\A$ if the following properties hold:
\begin{itemize}
\item[(Ci)] $\B$ contains an approximate unit in $\A$;
\item[(Cii)] $\B$ is a MASA;
\item[(Ciii)] $\B$ is regular, i.e., the normalizer set $N(\B)=\{x\in \A: x\B x^*\cup x^*\B x\subseteq \B\}$ generates $\A$;
\item[(Civ)] there is a faithful conditional expectation $\E$ from $\A$ onto $\B$. 
\end{itemize}

Let $\Lambda$ be a $k$-graph. 
In this section, we prove that the cycline subalgebra $\M_\Lambda$ of $\ca(\Lambda)$ is Cartan under 
the condition that the (bimodule) spectrum of $\M_\Lambda$ (the definition will be given later) is closed, 
which is used to obtain property (Civ). 

\begin{prop}
\label{P:regular}
Let $\Lambda$ be a $k$-graph, and $\M_\Lambda$ be the cycline subalgebra of $\ca(\Lambda)$. 
Then $\M_\Lambda$ is regular. 
\end{prop}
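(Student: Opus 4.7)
The plan is to reduce regularity to proving $s_\lambda \in N(\M_\Lambda)$ for every $\lambda \in \Lambda$. Since $\ca(\Lambda) = \ol\spn\{s_\mu s_\nu^* : \mu,\nu \in \Lambda\}$, and the normalizer $N(\M_\Lambda)$ is plainly closed under products and adjoints, this reduction immediately yields regularity: each generator $s_\mu s_\nu^* = s_\mu \cdot s_\nu^*$ becomes a normalizer. For a vertex $\lambda = v$, $s_v \in \fD_\Lambda \subseteq \M_\Lambda$ commutes with $\M_\Lambda$, so the work is really only for $\lambda$ with $d(\lambda) \ne 0$.

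For the forward inclusion $s_\lambda \M_\Lambda s_\lambda^* \subseteq \M_\Lambda$, given $s_\alpha s_\beta^* \in \M_\Lambda$ with $\alpha \sim \beta$, the product $s_\lambda s_\alpha s_\beta^* s_\lambda^*$ vanishes unless $s(\lambda) = r(\alpha) = r(\beta)$, in which case it equals $s_{\lambda\alpha} s_{\lambda\beta}^*$; and $\lambda\alpha \sim \lambda\beta$ follows straight from the definition of $\sim$, since $(\lambda\alpha)x = \lambda(\alpha x) = \lambda(\beta x) = (\lambda\beta)x$ for every $x \in s(\alpha)\Lambda^\infty$. The harder inclusion $s_\lambda^* \M_\Lambda s_\lambda \subseteq \M_\Lambda$ I would handle by using (CK4) to refine the product: for any $p \in \bN^k$,
\[
s_\alpha s_\beta^* \;=\; s_\alpha s_{s(\alpha)} s_\beta^* \;=\; \sum_{\rho \in s(\alpha)\Lambda^p} s_{\alpha\rho}\, s_{\beta\rho}^*.
\]
Choosing $p \ge d(\lambda)$ coordinatewise, the factorization property forces $s_\lambda^* s_{\alpha\rho}$ to equal $s_{\tau_\rho}$ when $\alpha\rho$ factors uniquely as $\lambda\tau_\rho$ and to vanish otherwise; similarly $s_{\beta\rho}^* s_\lambda$ is either $s_{\tau'_\rho}^*$ or zero. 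Each surviving term of $s_\lambda^* s_\alpha s_\beta^* s_\lambda$ therefore has the form $s_{\tau_\rho} s_{\tau'_\rho}^*$.

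The key point is then $\tau_\rho \sim \tau'_\rho$ whenever both factorizations exist. Since $\alpha \sim \beta$ yields $\alpha\rho \sim \beta\rho$ by the easy-direction argument, for $x \in s(\tau_\rho)\Lambda^\infty$ one obtains
\[
\lambda(\tau_\rho x) \;=\; (\lambda\tau_\rho)x \;=\; (\alpha\rho)x \;=\; (\beta\rho)x \;=\; (\lambda\tau'_\rho)x \;=\; \lambda(\tau'_\rho x),
\]
and left-cancellation of $\lambda$ on infinite paths (by truncating and invoking the unique factorization in $\Lambda$) gives $\tau_\rho x = \tau'_\rho x$, i.e., $\tau_\rho \sim \tau'_\rho$. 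Thus every surviving term lies in $\M_\Lambda$, finishing the inclusion. The main obstacle I anticipate is bookkeeping around which $\rho$ yield factorizations of $\alpha\rho$ and $\beta\rho$ through $\lambda$ and which do not; but this is resolved cleanly by (CK4) together with the $k$-graph factorization property, and requires no additional hypothesis on $\Lambda$.
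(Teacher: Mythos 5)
Your proof is correct, and it reaches the same underlying periodicity facts as the paper --- that $\sim$ is preserved by extending on the right ($\alpha\sim\beta \Rightarrow \alpha\rho\sim\beta\rho$), by prepending a path, and by cancelling a common left factor (via $\sigma^{d(\lambda)}$ on infinite paths) --- but the algebraic bookkeeping is organized differently. The paper shows directly that every standard generator $s_\alpha s_\beta^*$ normalizes $\M_\Lambda$: it expands $s_\beta^* s_\mu$ and $s_\nu^* s_\beta$ over common extensions ($\beta\mu'=\mu\beta'$, $\beta\nu'=\nu\beta'$), obtains $s_\alpha s_\beta^* s_\mu s_\nu^* s_\beta s_\alpha^*=\sum s_{\alpha\mu'}s_{\alpha\nu'}^*$, and then runs exactly your chain $\mu\beta'\sim\nu\beta' \Rightarrow \beta\mu'\sim\beta\nu' \Rightarrow \mu'\sim\nu' \Rightarrow \alpha\mu'\sim\alpha\nu'$. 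You instead reduce to showing each $s_\lambda$ lies in $N(\M_\Lambda)$ (which indeed suffices, since the $s_\lambda$ generate $\ca(\Lambda)$ and, as you note, $N(\M_\Lambda)$ is closed under products and adjoints), dispose of $s_\lambda \M_\Lambda s_\lambda^*$ trivially, and for $s_\lambda^*\,\cdot\,s_\lambda$ first refine with (CK4) so that only the elementary fact $s_\lambda^* s_\sigma=s_\tau$ or $0$ for $d(\sigma)\ge d(\lambda)$ is needed. What this buys you is avoidance of the general product formula $s_\beta^* s_\mu=\sum_{\beta\mu'=\mu\beta'}s_{\mu'}s_{\beta'}^*$ that the paper invokes without proof, at the cost of an extra (CK4) refinement; the paper's route is a single direct computation. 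Two small points to make explicit: checking normalization on the generators $s_\alpha s_\beta^*$ with $\alpha\sim\beta$ suffices because $\M_\Lambda=\ol\spn\{s_\mu s_\nu^*:\mu\sim\nu\}$ and conjugation is linear and continuous; and before concluding $\tau_\rho\sim\tau'_\rho$ you should record that $s(\tau_\rho)=s(\alpha\rho)=s(\rho)=s(\beta\rho)=s(\tau'_\rho)$, since equality of sources is part of the definition of $\sim$.
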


\begin{proof}
Since $\ca(\Lambda)$ is generated by its standard generators $s_\alpha s_\beta^*$'s $(\alpha,\beta\in\Lambda$), 
it suffices to show that every $s_\alpha s_\beta^*$ is a normalizer of $\M_\Lambda$. 
But $\M_\Lambda$ is generated by $s_\mu s_\nu^*$ with $\mu\sim\nu$. Hence, one only needs to show that 
\[
s_\alpha s_\beta^*  s_\mu s_\nu^*  s_\beta s_\alpha^*\in\M_\Lambda.
\]
To this end, let us assume that 
\[
 s_\beta^* s_\mu=\sum_{\beta \mu'=\mu \beta'} s_{\mu'} s_{\beta'}^*\quad\text{and}\quad
 s_\nu^* s_\beta=\sum_{\beta \nu'=\nu \beta''} s_{\beta''} s_{\nu'}^*.
\] 
Then 
\begin{align}
\nonumber
&s_\alpha s_\beta^* s_\mu s_\nu^* s_\beta s_\alpha^*\\
\nonumber
=&s_\alpha\left(\sum_{\beta \mu'=\mu \beta'} s_{\mu'} s_{\beta'}^*\right)\left(\sum_{\beta \nu'=\nu \beta''} s_{\beta''} s_{\nu'}^*\right)s_\alpha^*\\
\nonumber
=&s_\alpha\left(\sum_{\beta \mu'=\mu \beta', \, \beta \nu'=\nu \beta'} s_{\mu'} s_{\nu'}^*\right)s_\alpha^*
\quad \big(\text{as }d(\beta')=d(\beta'') \Rightarrow s_{\beta'}^* s_{\beta''}=\delta_{\beta',\beta''}\, s_{s(\beta')}\big)\\
\label{E:end}
=&\sum_{\beta \mu'=\mu \beta', \, \beta \nu'=\nu \beta'} s_{\alpha\mu'} s_{\alpha\nu'}^*.
\end{align}
Clearly $\mu\beta'\sim \nu\beta'$ as $\mu\sim\nu$. It follows from $\beta \mu'=\mu \beta', \, \beta \nu'=\nu \beta'$ that $\beta\mu'\sim\beta\nu'$.
This easily implies $\mu'\sim\nu'$ and so $\alpha\mu'\sim\alpha\nu'$. 
Therefore one has $s_\alpha s_\beta^* s_\mu s_\nu^* s_\beta s_\alpha^*\in\M_\Lambda$ from \eqref{E:end}. 
This ends our proof. 
\end{proof}

In what follows, we identify $\ca(\Lambda)$ with $\ca(\G_\Lambda)$ under the isomorphism mapping $s_\alpha s_\beta^*\mapsto 1_{Z(\alpha,\beta)}$,
where $1_{Z(\alpha,\beta)}$ is the characteristic function of the cylinder set $Z(\alpha,\beta)$ (cf. \cite{KumPask}). Then the diagonal algebra $\fD_\Lambda$ 
is identified as $ \rC_0(\G_\Lambda^{(0)})$,
where $\G_\Lambda^{(0)}$ is the unit space of $\G_\Lambda$;
and $\M_\Lambda$ is generated by $1_{Z(\mu,\nu)}$'s with $\mu\sim \nu\in\Lambda$.
Evidently, $\M_\Lambda$ is a $\fD_\Lambda$-bimodule. By definition \cite{Hop05}, its (bimodule) \textit{spectrum} is 
\[
\sigma(\M_\Lambda)=\big\{(x,n,y)\in \G_\Lambda: f(x,n,y)\ne 0\text{ for some }f\in \M_\Lambda\big\}. 
\]
Clearly, $\sigma(\M_\Lambda)$ is always an open subset of $\G_\Lambda$.

\begin{prop}
\label{P:expectation}
Let $\Lambda$ be a $k$-graph, and $\M_\Lambda$ be the cycline subalgebra of $\ca(\Lambda)$. 
If $\sigma(\M_\Lambda)$ is closed in $\G_\Lambda$, then there is a faithful conditional expectation from $\ca(\Lambda)$ onto $\M_\Lambda$.
\end{prop}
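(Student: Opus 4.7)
The plan is to set $V=\sigma(\M_\Lambda)$, which is always open in $\G_\Lambda$ and, by hypothesis, also closed; hence $V$ is clopen. The guiding idea is to recognize $V$ as a clopen subgroupoid of $\G_\Lambda$ containing the unit space, and then to apply the standard restriction-to-a-clopen-subgroupoid machinery to produce the conditional expectation. The first task is thus to verify that $V$ really is a subgroupoid: it contains $\G_\Lambda^{(0)}$ because $v\sim v$ for every $v\in\Lambda^0$; it is closed under inversion because $\mu\sim\nu$ if and only if $\nu\sim\mu$. Closure under composition uses that $V$ lies inside the isotropy of $\G_\Lambda$ (since $\mu\sim\nu$ forces $\mu x=\nu x$ for all $x\in s(\mu)\Lambda^\infty$), combined with the group structure of $\Per\Lambda\subseteq\bZ^k$ and the fact that concatenations of cycline pairs remain cycline, both available through the analysis of \cite{BNR14}.

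Next, I would identify $\M_\Lambda$ with the closure of $C_c(V)$ inside $\ca(\G_\Lambda)$. The containment $\M_\Lambda\subseteq\ol{C_c(V)}$ is immediate from the generators $s_\mu s_\nu^*\leftrightarrow 1_{Z(\mu,\nu)}$ with $\mu\sim\nu$. For the reverse, the key observation is that whenever a basic cylinder $Z(\mu,\nu)$ happens to lie entirely in $V$, the pair $(\mu,\nu)$ must already be cycline: for each $x\in s(\mu)\Lambda^\infty$, the point $(\mu x,d(\mu)-d(\nu),\nu x)$ belongs to some $Z(\mu',\nu')$ with $\mu'\sim\nu'$, which forces $\mu x=\nu x$ at that $x$; as this holds for every $x$, one concludes $\mu\sim\nu$. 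Writing elements of $C_c(V)$ as limits of linear combinations of such cylinder indicators then yields $C_c(V)\subseteq\M_\Lambda$.

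Armed with this identification, I would define $\E(f)=1_V\cdot f$ pointwise on $C_c(\G_\Lambda)$, which lands in $C_c(V)\subseteq\M_\Lambda$ because $V$ is clopen. Idempotence and the identity-on-$\M_\Lambda$ property are then immediate. The bimodule identity $\E(mam')=m\,\E(a)\,m'$ for $m,m'\in\M_\Lambda$ and $a\in\ca(\Lambda)$ follows by unwinding the convolution formula and using that $V$ is a subgroupoid: if $\alpha\in V$ and either $\alpha\beta$ or $\beta\alpha$ lies in $V$, then so does $\beta$. Contractivity is the standard fact that restriction to a clopen subgroupoid of an amenable \'etale Hausdorff groupoid is contractive in the reduced norm, which lets $\E$ extend to a conditional expectation on all of $\ca(\Lambda)\cong\ca_r(\G_\Lambda)$. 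For faithfulness, I would invoke the canonical expectation $j\colon\ca(\Lambda)\to\fD_\Lambda$, $f\mapsto f|_{\G_\Lambda^{(0)}}$, which is faithful by the amenability and Hausdorffness of $\G_\Lambda$; since $\G_\Lambda^{(0)}\subseteq V$ we have $j\circ\E=j$, so $\E(a^*a)=0$ forces $j(a^*a)=0$ and hence $a=0$.

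The main obstacle will be the subgroupoid verification, specifically closure under composition. Inverses and unit-space containment are immediate from the definition of $\sim$, but composition genuinely requires leveraging the group structure of $\Per\Lambda$ together with a factorization argument that splices two cycline pairs sharing a common source, and this is the one place in the argument where the internal combinatorics of $\Lambda$ must be engaged beyond the abstract formalism of \cite{BNR14}.
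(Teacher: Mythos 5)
Your proposal is sound and reaches the same map in the end --- restriction of functions to $\sigma(\M_\Lambda)$, with faithfulness obtained by composing with the faithful canonical expectation onto $\fD_\Lambda$ (exactly the paper's $E=E\circ\E$ trick) --- but the route to knowing that this restriction is a well-defined conditional expectation onto $\M_\Lambda$ is genuinely different. The paper stays at the level of bimodules: it notes that $\M_\Lambda$ is a gauge-invariant $\fD_\Lambda$-bimodule, invokes Hopenwasser's Spectral Theorem for Bimodules to identify $\M_\Lambda$ with all $f\in\ca(\G_\Lambda)$ vanishing off $\sigma(\M_\Lambda)$, checks that the restriction map is a contractive linear idempotent via the reduced-norm formula, and then gets the expectation property for free from Tomiyama's theorem --- so it never has to verify that $\sigma(\M_\Lambda)$ is a subgroupoid or check the module identity by hand. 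You instead realize $V=\sigma(\M_\Lambda)$ as a clopen subgroupoid containing the unit space, prove $\M_\Lambda=\ol{\rC_c(V)}$ directly (your observation that $Z(\mu,\nu)\subseteq V$ forces $\mu\sim\nu$ is correct and does the work that the bimodule spectral theorem does in the paper), and verify the bimodule identity from the convolution formula; this is essentially the groupoid-intrinsic approach of the later Brown--Nagy--Reznikoff--Sims--Williams paper cited in the Note added in proof, and it buys a statement that makes sense for general \'etale groupoids, at the cost of the extra subgroupoid and density verifications that the paper's citations absorb.

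One caveat on the step you yourself flag as the main obstacle: closure of $V$ under composition does not need, and should not rest on, ``the group structure of $\Per\Lambda$'' --- for a general row-finite source-free $k$-graph $\Per\Lambda$ need not be a subgroup of $\bZ^k$, so that tool is unreliable. The clean argument is that $V=\bigcup\{Z(\mu,\nu):\mu\sim\nu\}$ is exactly the interior of the isotropy bundle of $\G_\Lambda$ (a cylinder contained in the isotropy is automatically cycline, by the same argument you use for $\M_\Lambda\supseteq\rC_c(V)$), and in any \'etale groupoid the product of two open subsets of the isotropy is an open subset of the isotropy; hence $Z(\mu,\nu)\cdot Z(\alpha,\beta)$ is an open subset of the isotropy containing the composite point, and every basic cylinder inside it is cycline, putting the composite back in $V$. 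With that replacement your outline goes through.
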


\begin{proof}
It is easy to see that $\M_\Lambda$ is a gauge invariant $\fD_\Lambda$-bimodule. 
It then follows from \cite[Spectral Theorem for Bimodules on P.~997]{Hop05} that one has
\[
\M_\Lambda=\big\{f\in \ca(\G_\Lambda): f(x,n,y)=0 \text{ for all } (x,n,y)\not\in \sigma(\M_\Lambda)\big\}.
\]
Since $\sigma(\M_\Lambda)$ is closed in $\G_\Lambda$, one now can define the restriction mapping 
$\E: \ca(\G_\Lambda)\to \M_\Lambda$ via $\E(f)=f|_{\sigma(\M_\Lambda)}$ for all $f\in\ca(\G_\Lambda)$. 
Clearly, $\E$ is a linear idempotent. Using the definition of the norm in 
$\ca_r(\G_\Lambda),$\footnote{Recall that the reduced C*-norm on $\rC_c(\G_\Lambda)$ is 
given by 
\[
\|f\|=\sup\big\{\|\lambda_u(f)\|:u\in\G_\Lambda^{(0)}\big\} \text{ for all }f\in \rC_c(\G_\Lambda),
\] 
where $\lambda_u$ is the regular representation of $\rC_c(\G_\Lambda)$ on $\ell^2(s^{-1}(u))$: 
\[
\lambda_u(f)\xi(\gamma)=\sum_{\alpha\beta=\gamma}f(\alpha)\xi(\beta)\text{ for all } u\in \G^{(0)},\, \xi\in \ell^2(s^{-1}(u))\text{ and }\gamma\in s^{-1}(u).
\]
} 
one can see that $\E$ is also contractive.
Furthermore, it is known that  the restriction mapping $E$ from $\ca(\G_\Lambda)$ to $\fD_\Lambda$  yields a faithful conditional expectation onto $\fD_\Lambda$ 
(see, e.g., \cite{BCS14, Ren08, Tho10}). In particular, $\|E\|=1$. 
Then it follows from $E=E\circ \E$ that $\E$ has norm $1$. Thus $\E$ is a conditional expectation onto $\M_\Lambda$ (\cite{Tom57} or \cite[II.6.10]{Bla06}). 
Moreover, the faithfulness of $E$ and $E=E\circ \E$ imply that $\E$ is faithful too. 
This ends the proof.
\end{proof}

Let us remark that the above proposition could be also proved by modifying the proof of \cite[Lemma 2.21]{Tho10}. 
Notice that the mapping $Q$ in (2.16) there is our restriction mapping. 
As our condition of $\sigma(\M_\Lambda)$ being closed, the discreteness of the isotropy group guarantees that $Q$ is well-defined.
Also, one has $\M_\Lambda=\ca_r(\sigma(\M_\Lambda))$ by \cite[Lemma2.10]{Tho10}.

\begin{thm}
\label{T:Cartan}
Let $\Lambda$ be a $k$-graph. Suppose that the (bimodule) spectrum $\sigma(\M_\Lambda)$ is closed in $\G_\Lambda$.  
Then $\M_\Lambda$ is a Cartan subalgebra in $\ca(\Lambda)$. 
\end{thm}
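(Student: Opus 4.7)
The plan is to verify the four defining properties \textup{(Ci)}--\textup{(Civ)} of a Cartan subalgebra for the pair $\M_\Lambda \subseteq \ca(\Lambda)$, noting that three of the four have essentially already been established earlier in the paper. The key observation is that the hypothesis on $\sigma(\M_\Lambda)$ enters only through property \textup{(Civ)}; the other three properties hold for every row-finite source-free $k$-graph $\Lambda$.

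First I would dispatch \textup{(Cii)}, \textup{(Ciii)} and \textup{(Civ)} by citation. Property \textup{(Cii)}, that $\M_\Lambda$ is a MASA in $\ca(\Lambda)$, is exactly Theorem \ref{T:MASA}. Property \textup{(Ciii)}, regularity of $\M_\Lambda$, is Proposition \ref{P:regular}. Property \textup{(Civ)}, the existence of a faithful conditional expectation $\E : \ca(\Lambda) \to \M_\Lambda$, is Proposition \ref{P:expectation}, and this is the single step that uses the standing hypothesis that $\sigma(\M_\Lambda)$ is closed in $\G_\Lambda$.

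It then remains only to verify \textup{(Ci)}: that $\M_\Lambda$ contains an approximate unit for $\ca(\Lambda)$. For this I would use the inclusion $\fD_\Lambda \subseteq \M_\Lambda$ noted in Section \ref{S:Pre}. The net
\[
\Bigl\{\sum_{v \in F} s_v\Bigr\}_{F}
\]
indexed by finite subsets $F \subseteq \Lambda^0$ ordered by inclusion consists of mutually orthogonal sums of vertex projections by \textup{(CK1)}, each sum is therefore a projection in $\fD_\Lambda$, and an elementary calculation on the generators $s_\mu s_\nu^*$ (using \textup{(CK2)}) shows that this net is an approximate unit for $\ca(\Lambda)$. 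Since the net sits inside $\fD_\Lambda \subseteq \M_\Lambda$, property \textup{(Ci)} follows.

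I do not expect a genuine obstacle here: the theorem is a clean assembly of the three preceding structural results together with the routine approximate-unit observation. The only point worth being careful about is making explicit that the hypothesis on $\sigma(\M_\Lambda)$ is invoked only to secure \textup{(Civ)} via Proposition \ref{P:expectation}, so that the reader sees where the condition is (and is not) used.
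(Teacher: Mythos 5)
Your proposal is correct and takes essentially the same route as the paper: properties (Cii), (Ciii) and (Civ) are assembled from Theorem \ref{T:MASA}, Proposition \ref{P:regular} and Proposition \ref{P:expectation}, with the closedness of $\sigma(\M_\Lambda)$ used only for (Civ). The only difference is that for (Ci) the paper cites \cite{BCS14} (and the proof of \cite[Theorem 2.23]{Tho10}), whereas you verify directly that the increasing net of finite sums of vertex projections in $\fD_\Lambda\subseteq\M_\Lambda$ is an approximate unit for $\ca(\Lambda)$; that elementary argument is valid.
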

\begin{proof}
The proof is now immediate: 
Property (Ci) follows from \cite[Lemma 2.1]{BCS14} (also cf.~the proof of \cite[Theorem 2.23]{Tho10});
properties (Cii), (Ciii) and (Civ) are from 
Theorem \ref{T:MASA}, Proposition \ref{P:regular} and Proposition \ref{P:expectation}, respectively. 
\end{proof}

\begin{rem}
It is probably worthwhile to mention that Q1 and Q2 mentioned in Introduction were successfully attacked for directed graphs (i.e., $1$-graphs) in \cite{NR12}.
Unfortunately, the methods there can not be applied to $k$-graphs, due to the complexity caused by periodicity 
in higher dimensional cases. Notice that Theorem \ref{T:Cartan} is proved in \cite{Yan14} for a special class of $k$-graphs as an application of an embedding theorem. 
\end{rem}

\subsection*{Acknowledgements} I would like to thank Prof. Ken Davidson and Prof. Laurent Marcoux for inviting me to 
give a talk on this paper at the University of Waterloo. 
Special thanks go to Prof. Aidan Sims for pointing out an error in the first version of this paper. 
The author is also very grateful for the referee's careful reading and useful comments. 

\subsection*{Note added in proof} After this paper was circulated, the main results of this paper were generalized by Brown-Nagy-Reznikoff-Sims-Williams in
the recent paper \cite{BNRSW15} by using completely different approaches. 


\end{document}